\newtheorem{thm}{Theorem}[section]
\newtheorem{cor}[thm]{Corollary}
\newtheorem{lem}[thm]{Lemma}
\newtheorem{rem}[thm]{Remark}
\newtheorem{example}[thm]{Example}
\newcommand{\B}{{\mathcal B}}
\newcommand{\C}{{\mathcal C}}
\newcommand{\D}{{\mathcal D}}
\newcommand{\E}{{\mathcal E}}
\newcommand{\I}{{\mathcal I}}
\newcommand{\Irr}{\operatorname{Irr}}
\newcommand\FPdim{\operatorname{FPdim}}
\newcommand\vect{\operatorname{Vec}}
\newcommand\Pic{\operatorname{G}}
\begin{document}

\title[Braided $\mathbb{Z}_q$-extensions of pointed fusion categories]{Braided $\mathbb{Z}_q$-extensions of pointed fusion categories}
\author{Jingcheng Dong}
\address{College of Engineering, Nanjing Agricultural University, Nanjing 210031, China}
\email{dongjc@njau.edu.cn}

\keywords{Extensions of fusion categories; Braided fusion categories; modular categories; Generalized Tambara-Yamagami fusion categories}

\subjclass[2010]{18D10; 16T05}

\date{\today}

\begin{abstract}
We classify braided $\mathbb{Z}_q$-extensions of pointed fusion categories, where $q$ is a prime number. As an application, we classify modular categories of Frobenius-Perron dimension $q^3$.
\end{abstract}
 \maketitle



\section{Introduction}\label{intro}
Let $k$ be an algebraically closed field of characteristic $0$. By definition, a fusion category is a $k$-linear semisimple rigid tensor category with finitely many isomorphism classes of simple objects, finite-dimensional spaces of morphisms, and such that the unit object $\textbf{1}$ is simple. We refer the reader to \cite{etingof2005fusion} for main notions and basic results on fusion categories.

Let $\C$ be a fusion category and let $G$ be a finite group with identity element $0$. A $G$-grading on $\C$ is a decomposition $\C=\oplus_{g\in G}\C_g$ as a direct sum of full Abelian subcategory such that the dual functor $*$ sends $\C_g$ into $\C_{g^{-1}}$ and the tensor product $\otimes:\C\times\C\to \C$ maps $\C_g\times\C_h$ into $\C_{gh}$.

The grading $\C=\oplus_{g\in G}\C_g$ is called faithful if $\C_g\neq 0$ for all $g\in G$. If $\C$ has a faithful $G$-grading and $\C_0=\D$ then $\C$ is called a $G$-extension of $\D$.

Group extensions of fusion categories paly important roles in classifying fusion categories and have been intensively studied by several authors \cite{gelaki2008nilpotent,etingof2005fusion,etingof2011weakly}. An important class of group extensions is the  $\mathbb{Z}_q$-extensions of pointed fusion categories, where $q$ is a prime number. By a pointed fusion category we mean a fusion category whose simple objects are all invertible. Several typical examples of $\mathbb{Z}_q$-extensions of pointed fusion categories are recalled in Section \ref{sec2}.

The main work of this paper is to classify braided $\mathbb{Z}_q$-extensions of pointed fusion categories. Let $\C$ be a braided $\mathbb{Z}_q$-extension of a pointed fusion category. Suppose that $\C$ is not pointed. We prove that $\C$ is equivalent to a Deligne tensor product $\B\boxtimes\E$, where $\B$ is a pointed fusion category, $\E$ is a fusion category of $q$ power dimension. In particular,  $\E$ is of type $(1,m;\alpha,n)$ for some positive integers $m,n$ and  $\alpha^2$ is a power of $q$. Suppose further that $\C$ is modular. Then we prove that $\C$ is equivalent to a Deligne tensor product $\B\boxtimes \I$, where $\B$ is a pointed modular category and $\I$ is an Ising category. Finally, we apply these results to modular categories of Frobenius-Perron (FP) dimension $q^3$, and prove that this class of modular categories are equivalent to $\B\boxtimes\I$, where $\B$ is a pointed modular category of FP dimension $2$, $\I$ is an Ising category.

\section{Preliminaries and Examples}\label{sec2}
Let $\C=\oplus_{g\in G}\C_g$ be a faithful grading on $\C$. Then the FP dimensions of $\C_g$ are all equal \cite[Proposition 8.20]{etingof2005fusion}, and hence we have $\FPdim(\C)=|G|\FPdim(\C_0)$, where we denote by $\FPdim(\C)$ the FP dimension of $\C$.

We denote by $\Irr(\C)$ the set of non-isomorphic simple objects of $\C$, and by $\Irr_{\alpha}(\C)$ the set of non-isomorphic simple objects of FP dimension $\alpha$. The adjoint subcategory $\C_{ad}$ is the full tensor subcategory of $\C$ generated by simple objects in $X\otimes X^*$, for all $X\in \Irr(\C)$. The rank of $\C$ is the cardinality of the set $\Irr(\C)$.

Every fusion category $\C$ has a unique faithful grading $\C=\oplus_{g\in \mathcal{U}(\C)}\C_g$ such that $\C_0=\C_{ad}$. This grading is called the universal grading of $\C$ and the group $\mathcal{U}(\C)$ is called the universal grading group of $\C$. This grading is universal because any faithful grading  $\C=\oplus_{g\in G}\C_g$ comes from a surjective group homomorphism $\mathcal{U}(\C)\to G$. This universal property implies the following result:

\begin{lem}\label{lem21}
Let $\C=\oplus_{g\in G}\C_g$ be a faithful grading on $\C$. Then $\C_{ad}\subseteq \C_0$.
\end{lem}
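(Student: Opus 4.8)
The plan is to show that every generating object of $\C_{ad}$ already lies in the trivial component $\C_0$, and then to invoke the fact that $\C_0$ is itself a tensor subcategory. Recall that $\C_{ad}$ is by definition the tensor subcategory generated by the simple constituents of $X\otimes X^*$ as $X$ ranges over $\Irr(\C)$, so it suffices to prove that each such constituent lies in $\C_0$.

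First I would observe that, because the grading $\C=\oplus_{g\in G}\C_g$ is a decomposition into full Abelian subcategories, every simple object of $\C$ is homogeneous, i.e. lies in exactly one $\C_g$. Fix $X\in\Irr(\C)$ and let $g\in G$ be its degree. By the grading axioms, the duality functor sends $\C_g$ into $\C_{g^{-1}}$, so $X^*\in\C_{g^{-1}}$, and the tensor product maps $\C_g\times\C_{g^{-1}}$ into $\C_{gg^{-1}}=\C_0$. Hence $X\otimes X^*\in\C_0$. Since $\C_0$ is a full Abelian subcategory, it is closed under subobjects and direct summands, so every simple constituent of $X\otimes X^*$ lies in $\C_0$.

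It remains to note that $\C_0$ is a tensor subcategory: the grading axiom gives $\C_0\otimes\C_0\subseteq\C_{0\cdot 0}=\C_0$, and the unit object $\textbf{1}$ has trivial degree. Therefore the smallest tensor subcategory of $\C$ containing all the generators of $\C_{ad}$, namely $\C_{ad}$ itself, is contained in $\C_0$, which is the claim.

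Alternatively, one could argue purely through the stated universal property: the given faithful grading arises from a surjective homomorphism $f\colon\mathcal{U}(\C)\to G$, under which $\C_0=\bigoplus_{u\in f^{-1}(0)}\C_u$ in terms of the universal grading components. Since $f$ carries the identity of $\mathcal{U}(\C)$ to $0$, this sum contains the universal trivial component, which is precisely $\C_{ad}$. I expect no serious obstacle here; the only point requiring care is the routine verification that each generator of $\C_{ad}$ is homogeneous of trivial degree, which is exactly the computation $gg^{-1}=0$ above.
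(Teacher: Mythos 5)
Your proposal is correct, and in fact it contains two valid proofs. The paper itself gives no written proof: it simply asserts that the lemma follows from the universal property of the grading $\C=\oplus_{g\in\mathcal{U}(\C)}\C_g$ with $\C_{ad}=\C_{\mathrm{id}}$, which is exactly your second, "alternative" paragraph (the given grading is induced by a surjection $f\colon\mathcal{U}(\C)\to G$, so $\C_0\supseteq\C_{f^{-1}(0)}\supseteq\C_{ad}$). Your primary argument is a different and more elementary route: it unwinds the definition of $\C_{ad}$ and checks directly that each generator $X\otimes X^*$ is homogeneous of degree $gg^{-1}=0$, then uses that $\C_0$ is a full Abelian tensor subcategory (hence closed under simple constituents and containing $\textbf{1}$). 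This buys self-containedness --- it needs only the grading axioms stated in the introduction, not the existence or universality of the universal grading --- at the cost of a couple of routine verifications (that simple objects are homogeneous, which you correctly justify by indecomposability, and that $\C_0$ is closed under constituents, which holds by semisimplicity). Both arguments are sound; either would serve as a complete proof of the lemma.
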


A fusion category $\C$ is called integral if $\FPdim(X)$ is an integer for all objects $X$ in $\C$, where $\FPdim(X)$ denotes the FP dimension of $X$. A fusion category $\C$ is called weakly integral if $\FPdim(\C)$ is an integer. Let $\C$ be a $G$-extension of a pointed fusion category $\D$. Then $\FPdim(\C)=|G|\FPdim(\D)$, and hence $\C$ is weakly integral since $\FPdim(\D)$ is an integer.

Let $\C_{pt}$ denote the fusion subcategory generated by all invertible simple objects of $\C$. Then $\C_{pt}$ is the largest pointed fusion subcategory of $\C$. All non-isomorphic invertible objects of $\C$ form a group with multiplication given by tensor product. We denote this group by $\Pic(G)$. The group $\Pic(G)$ acts on the set $\Irr(\C)$ by left tensor multiplication, and this action preserves FP dimension. For $X\in \Irr(\C)$, we use $\Pic[X]$ to denote the stabilizer of $X$ under this action.

We now discuss some examples of $\mathbb{Z}_q$-extensions of pointed fusion categories.

\begin{example}\label{example22}
{\rm (Generalized Tambara-Yamagami fusion categories) Let $\C$ be a fusion category. If $\C$ is not pointed and $X\otimes Y$ is a direct sum of invertible objects, for all non-invertible simple objects $X,Y\in\C$, then $\C$ is a generalized Tambara-Yamagami fusion category.}

{\rm Generalized Tambara-Yamagami fusion categories were classified in \cite{liptrap2010generalized}, up to equivalence of tensor categories, and then were further studied in \cite{natale2013faithful}. By \cite{natale2013faithful}, there exists a normal subgroup $N$ of $G(\C)$ such that $\Pic[X]=N$, for all non-invertible simple objects $X$. This implies that $\FPdim(X)=\sqrt{|N|}$ for all non-invertible simple objects $X$. The rank of $\C$ is $[G(\C):N](1+|N|)$, and hence $\FPdim(\C)=2|G(\C)|$. This implies that $\C$ is a $\mathbb{Z}_2$-extension of a pointed fusion category generated by $G(\C)$.}
\end{example}

\begin{example}\label{example23}
{\rm (Tambara-Yamagami fusion categories) Let $\C$ be a generalized Tambara-Yamagami fusion category. If $N=G(\C)$ then the rank of $\C_{\textbf{1}}$ is $1$. Then we can write $\Irr(\C)=G(\C)\cup\{X\}$, where $X$ is the unique non-invertible simple object of $\C$. In this case, $\C$ is called a Tambara-Yamagami fusion category. This class of fusion categories were classified in \cite{Tambara1998692}.}
\end{example}

\begin{example}\label{example24}
{\rm (Ising fusion category) Let $\C$ be a generalized Tambara-Yamagami fusion category.  If the order of $G(\C)$ is $2$ and $N=G(\C)$ then $\C$ is called an Ising fusion category. It is well known that any Ising category admits a structure of braided category. This class of fusion categories were classified in \cite[Appendix B]{drinfeld2010braided}.}
\end{example}

\begin{example} \label{example25}
{\rm A fusion category $\C$ is called nilpotent if there is a sequence of fusion categories $\vect_k=\C_0\subseteq\C_1\subseteq\cdots\subseteq\C_n=\C$  and a sequence of finite groups $G_1,\cdots,G_n$ such that $\C_i$ is obtained from $\C_{i-1}$ by a $G_i$-extension, for all $1\leq i\leq n$. If the groups $G_1,\cdots,G_n$ can be chosen to be cyclic of prime order then $\C$ is called cyclically nilpotent.}

{\rm Let $\C$ be a cyclically nilpotent fusion category, and let  $\C_0,\C_1,\cdots,\C_n$,  $G_1,\cdots,G_n$ be the corresponding fusion subcategories and finite groups. Since $\C_0$ is the trivial fusion category and $\C_1$ is a $\mathbb{Z}_p$-extension of $\C_0$ for some prime number $p$, $\C_1$ is a pointed fusion category. It follows that $\C_2$ is a $\mathbb{Z}_q$-extension of a pointed fusion category ($\C_1$) for some prime number $q$.}
\end{example}

\begin{lem}\label{lem23}
Let $q$ be a prime number and let $\C=\oplus_{g\in \mathbb{Z}_q}\C_g$ be a faithful grading of $\C$. Assume that the trivial component $\C_0$ is pointed. Then

(1)\, The adjoint subcategory $\C_{ad}$ is pointed;

(2)\, $\C$ is pointed, or $\C_0=\C_{pt}$ is the largest pointed fusion subcategory of $\C$.
\end{lem}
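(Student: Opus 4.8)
The plan is to treat the two parts separately: part (1) will follow almost immediately from the preliminary material, while part (2) reduces to a dichotomy governed by the primality of $q$.

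For part (1), I would simply invoke Lemma \ref{lem21}, which gives $\C_{ad}\subseteq\C_0$. Since $\C_0$ is pointed by hypothesis, every simple object of $\C_{ad}$ is a simple object of $\C_0$ and is therefore invertible. Hence $\C_{ad}$ is a fusion subcategory all of whose simple objects are invertible, i.e. $\C_{ad}$ is pointed. No computation is needed beyond this observation that a fusion subcategory of a pointed category is again pointed.

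For part (2), the starting point is that $\C_0\subseteq\C_{pt}$, because $\C_0$ is a pointed fusion subcategory and $\C_{pt}$ is by definition the largest one. I would then restrict the given $\mathbb{Z}_q$-grading to the fusion subcategory $\C_{pt}$, setting $(\C_{pt})_g=\C_{pt}\cap\C_g$, and consider its support $H=\{g\in\mathbb{Z}_q:\C_{pt}\cap\C_g\neq 0\}$. A short check shows $H$ is a subgroup: if $X\in\C_{pt}\cap\C_g$ and $Y\in\C_{pt}\cap\C_h$ are simple, then $X\otimes Y$ lies in $\C_{pt}\cap\C_{g+h}$ and $X^*$ lies in $\C_{pt}\cap\C_{-g}$, both nonzero. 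Because $q$ is prime, $H$ is either trivial or all of $\mathbb{Z}_q$, and these two possibilities give exactly the two alternatives in the statement. In the case $H=0$, every invertible object of $\C$ lies in $\C_0$, so $\C_{pt}\subseteq\C_0$, and together with $\C_0\subseteq\C_{pt}$ this yields $\C_0=\C_{pt}$. In the case $H=\mathbb{Z}_q$, the induced grading on $\C_{pt}$ is faithful, so by the FP-dimension formula for faithful gradings recalled at the beginning of Section \ref{sec2} we get $\FPdim(\C_{pt})=q\cdot\FPdim((\C_{pt})_0)$; since $(\C_{pt})_0=\C_{pt}\cap\C_0=\C_0$, this equals $q\cdot\FPdim(\C_0)=\FPdim(\C)$, forcing $\C_{pt}=\C$, i.e. $\C$ is pointed.

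I expect the only points requiring care to be the verification that the support $H$ is a genuine subgroup and the correct identification of $(\C_{pt})_0$ with $\C_0$ (which uses $\C_0\subseteq\C_{pt}$); once these are in place, the primality of $q$ does all the real work, and there is no serious combinatorial obstacle.
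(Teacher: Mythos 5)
Your part (1) is exactly the paper's argument: Lemma \ref{lem21} gives $\C_{ad}\subseteq\C_0$, and a fusion subcategory of a pointed category is pointed. For part (2) you reach the correct conclusion by a genuinely different mechanism. The paper works purely with Frobenius--Perron dimensions: since $\C_0\subseteq\C_{pt}\subseteq\C$, it invokes \cite[Proposition 8.15]{etingof2005fusion} to get that $\FPdim(\C_0)$ divides $\FPdim(\C_{pt})$, which in turn divides $\FPdim(\C)=q\FPdim(\C_0)$; primality of $q$ then forces $\FPdim(\C_{pt})$ to be $\FPdim(\C_0)$ or $q\FPdim(\C_0)$, giving the two alternatives. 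You instead restrict the $\mathbb{Z}_q$-grading to $\C_{pt}$ and observe that its support $H$ is a subgroup of $\mathbb{Z}_q$, hence trivial or everything; the trivial case gives $\C_{pt}\subseteq\C_0$ directly, and the full case gives a faithful $\mathbb{Z}_q$-grading on $\C_{pt}$ with trivial component $\C_0$, whence $\FPdim(\C_{pt})=q\FPdim(\C_0)=\FPdim(\C)$ and $\C_{pt}=\C$. Both arguments are sound and both ultimately rest on the primality of $q$, but they locate it differently: the paper in the lattice of divisors of $\FPdim(\C)/\FPdim(\C_0)$, you in the lattice of subgroups of $\mathbb{Z}_q$. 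Your route trades the divisibility theorem for the (standard, but worth stating explicitly) facts that the support of a grading restricted to a fusion subcategory is a subgroup and that the restriction is then a faithful grading to which \cite[Proposition 8.20]{etingof2005fusion} applies; it is arguably more self-contained, since the dichotomy comes from elementary group theory rather than from an arithmetic property of FP dimensions, though you still need, as the paper implicitly does, that a fusion subcategory of full FP dimension is the whole category.
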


\begin{proof}
(1)\, By Lemma \ref{lem21}, $\C_{ad}$ is contained in $\C_0$. Hence, $\C_{ad}$ is pointed.

(2)\, Since $\C_{pt}$ is the unique largest pointed fusion subcategory of $\C$, $\C_{pt}$ contains $\C_0$ as a fusion subcategory. This fact shows that $\FPdim(\C_0)$ divides $\FPdim(\C_{pt})$ \cite[Proposition 8.15]{etingof2005fusion}. On the other hand, $\FPdim(\C)=q\FPdim(\C_0)$. These facts imply that $\FPdim(\C_{pt})=\FPdim(\C_0)$ or $q\FPdim(\C_0)$. The first case means that $\C_0=\C_{pt}$, and the second case means that $\C=\C_{pt}$ is pointed.
\end{proof}

\begin{lem}\label{lem24}
Let $\C$ be a $\mathbb{Z}_q$-extension of a pointed fusion category. Assume that $\C$ is not pointed. Then $\C$ is a generalized Tambara-Yamagami fusion category if and only if $q=2$.
\end{lem}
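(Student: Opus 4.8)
The plan is to prove both implications directly: the forward one ($\C$ generalized Tambara--Yamagami $\Rightarrow q=2$) by a Frobenius--Perron dimension count, and the backward one ($q=2 \Rightarrow \C$ generalized Tambara--Yamagami) from the grading structure, using Lemma \ref{lem23}(2) as the key input in each case.

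For the forward implication, I would suppose $\C$ is a generalized Tambara--Yamagami fusion category and simply compare two expressions for $\FPdim(\C)$. The dimension count recorded in Example \ref{example22} gives $\FPdim(\C) = 2|G(\C)|$, while the faithful $\mathbb{Z}_q$-grading gives $\FPdim(\C) = q\,\FPdim(\C_0)$. Since $\C$ is not pointed, Lemma \ref{lem23}(2) forces $\C_0 = \C_{pt}$, so every invertible object of $\C$ lies in $\C_0$ and hence $\FPdim(\C_0) = |G(\C)|$. Combining the two formulas yields $q|G(\C)| = 2|G(\C)|$, whence $q = 2$.

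For the backward implication, I would assume $q = 2$, so that $\C = \C_0 \oplus \C_1$. Again, because $\C$ is not pointed, Lemma \ref{lem23}(2) gives $\C_0 = \C_{pt}$; since $\C_0$ is pointed all of its simple objects are invertible, and therefore every non-invertible simple object of $\C$ must lie in $\C_1$. Taking any two non-invertible simple objects $X, Y$, both lie in $\C_1$, so $X \otimes Y \in \C_{1+1} = \C_0$; as $\C_0$ is pointed, $X \otimes Y$ decomposes as a direct sum of invertible objects. This is exactly the defining property, and since $\C$ is not pointed by hypothesis, $\C$ is a generalized Tambara--Yamagami fusion category.

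The argument is short, and I do not expect a genuine obstacle; the only real conceptual input is Lemma \ref{lem23}(2), whose conclusion $\C_0 = \C_{pt}$ does double duty. In the forward direction it pins down $\FPdim(\C_0) = |G(\C)|$, and in the backward direction it guarantees that all non-invertible simples are concentrated in the single nontrivial graded piece $\C_1$, which is what makes $X \otimes Y$ land in the pointed component. The one point I would check carefully is that nothing is lost when $\C_0$ is itself nontrivially pointed, but since the graded pieces interact only through the group law of $\mathbb{Z}_2$, the product of two degree-$1$ objects automatically has degree $0$ irrespective of the size of $G(\C)$.
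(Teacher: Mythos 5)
Your proof is correct and follows essentially the same route as the paper: the backward direction uses Lemma \ref{lem23}(2) to place all non-invertible simples in $\C_1$ so that their products land in the pointed component $\C_0$, and the forward direction compares $\FPdim(\C)=2|G(\C)|$ from Example \ref{example22} with $\FPdim(\C)=q\FPdim(\C_0)=q|G(\C)|$. No issues.
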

\begin{proof}
Let $\C=\oplus_{g\in \mathbb{Z}_q}\C_g$ be the $\mathbb{Z}_q$-extension. If $q=2$ then Lemma \ref{lem23} shows that $\C_0=\C_{pt}$ is the largest pointed fusion subcategory of $\C$, and hence all non-invertible simple objects are contained in $\C_1$. Let $X,Y$ be non-invertible simple objects of $\C$. Then $X\otimes Y$ is contained in $\C_0$, which means that $X\otimes Y$ is a direct sum of invertible simple objects. Hence, $\C$ is a generalized Tambara-Yamagami fusion category.

Conversely, if $\C$ is a generalized Tambara-Yamagami fusion category then $\FPdim(\C)=2|G(\C)|$  by Example \ref{example22}. On the other hand, $\FPdim(\C)=q\FPdim(\C_0)=q|G(\C)|$ by Lemma \ref{lem23}. Hence $q=2$ as claimed.
\end{proof}

\section{Main results}
Recall that a fusion category $\C$ is braided if it has a natural isomorphism $c_{X,Y}:X\otimes Y\to Y\otimes X$, for all $X,Y$ in $\C$,  which satisfies the hexagon axioms \cite{kassel1995quantum}.

Let $1 = d_0 < d_1< \cdots < d_s$ be positive real numbers, and let $n_0,n_1,\cdots,n_s$ be positive integers. A fusion category is said of type $(d_0,n_0; d_1,n_1;\cdots;d_s,n_s)$ if $n_i$ is the number of the non-isomorphic simple objects of Frobenius-Perron dimension $d_i$, for all $0\leq i\leq s$.

\begin{thm}\label{thm31}
Let $\C$ be a braided $\mathbb{Z}_q$-extension of a pointed fusion category. Suppose that $\C$ is not pointed. Then $\C\cong \B\boxtimes\E$, where $\B$ is a pointed fusion category, $\E$ is a fusion category of $q$ power dimension. In particular,  $\E$ is of type $(1,m;\alpha,n)$ for some positive integers $m,n$ and  $\alpha^2$ is a power of $q$.
\end{thm}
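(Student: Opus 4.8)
The plan is to reduce the statement to a product of prime-power pieces and then analyse the $q$-part. First I would record that $\C$ is nilpotent: by Lemma \ref{lem23} we have $\C_0=\C_{pt}$, so $\C$ is a $\mathbb{Z}_q$-extension of the pointed (hence nilpotent) category $\C_{pt}$, and an extension of a nilpotent category is again nilpotent. Since $\C$ is braided, the structure theorem for nilpotent braided fusion categories \cite{gelaki2008nilpotent} applies and yields a Deligne decomposition $\C\cong\boxtimes_p\C_{(p)}$ into braided fusion categories $\C_{(p)}$ of prime power dimension $p^{n_p}$, the product running over the primes dividing $\FPdim(\C)=q\,\FPdim(\C_{pt})$. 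Grouping the factors, I write $\C\cong\C_{(q)}\boxtimes\C_{(q')}$, where $\C_{(q')}=\boxtimes_{p\neq q}\C_{(p)}$ has dimension prime to $q$.

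Next I would show that $\C_{(q')}$ is pointed. Because the universal grading group of a Deligne product is the product of the universal grading groups, $\mathcal{U}(\C)\cong\mathcal{U}(\C_{(q)})\times\mathcal{U}(\C_{(q')})$, and $|\mathcal{U}(\C_{(q')})|$ divides $\FPdim(\C_{(q')})$, which is prime to $q$. The faithful $\mathbb{Z}_q$-grading corresponds to a surjection $\mathcal{U}(\C)\to\mathbb{Z}_q$; its restriction to the $q'$-group $\mathcal{U}(\C_{(q')})$ is trivial, so $\C_{(q')}$ lies in the trivial component $\C_0=\C_{pt}$ and is therefore pointed. Setting $\B:=\C_{(q')}$ and $\E:=\C_{(q)}$ then gives $\C\cong\B\boxtimes\E$ with $\B$ pointed and $\FPdim(\E)$ a power of $q$. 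Moreover $\E$ is non-pointed (as $\C$ is), inherits a braiding as a tensor factor, and — since the $\mathbb{Z}_q$-grading is concentrated on the $\E$-factor, giving $\C_g=\B\boxtimes\E_g$ — inherits a faithful $\mathbb{Z}_q$-grading with $\E_0$ pointed, so Lemma \ref{lem23} applies to $\E$ and $\E_0=\E_{pt}$.

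It remains to determine the type of $\E$. Since $\E$ is weakly integral of $q$-power dimension, every simple $X$ satisfies $\FPdim(X)^2\in\mathbb{Z}$ and $\FPdim(X)^2\mid\FPdim(\E)$ \cite{etingof2011weakly}, so $\FPdim(X)^2$ is a power of $q$; in particular $\alpha^2$ is a power of $q$ for any non-invertible simple $X$ of dimension $\alpha$. For the multiplicities I would use the $G(\E)$-action by tensoring. If $X\in\E_g$ with $g\neq0$, then $X\otimes X^*\in\E_0=\E_{pt}$ is a sum of invertibles; one computes $X\otimes X^*=\bigoplus_{a\in\Pic[X]}a$, whence $\FPdim(X)^2=|\Pic[X]|$ and the $G(\E)$-orbit of $X$ contributes $(|G(\E)|/|\Pic[X]|)\cdot|\Pic[X]|=|G(\E)|=\FPdim(\E_g)$ to $\FPdim(\E_g)$. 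Hence each nontrivial component $\E_g$ consists of a single $G(\E)$-orbit of non-invertible simples of one dimension $\alpha_g$.

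Finally I would prove that all the $\alpha_g$ coincide; this is the step I expect to be the main obstacle, and it is exactly where the braiding, rather than mere fusion-ring bookkeeping, is needed, since the orbit counts and the subadditivity $\alpha_{g+h}\mid\alpha_g\alpha_h$ alone do not force equality. My plan here is to use the identity $\E_{ad}=C_\E(\E_{pt})$ valid for braided categories \cite{gelaki2008nilpotent,drinfeld2010braided}: by Lemma \ref{lem23}, $\E_{ad}$ is pointed, say $\E_{ad}=\vect_R$ with $R=G(\E_{ad})$ the radical of the quadratic form on $\E_{pt}$. Since $\Pic[X]\subseteq R$ and tensoring by $R$ preserves the universal grading components, the $R$-orbit of any non-invertible $X$ already has squared-dimension sum $|R|=\FPdim(\E_{ad})$, equal to the dimension of its universal component; thus each nontrivial universal component is a single $R$-orbit. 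The crux is then to show $\Pic[X]=R$ for every non-invertible simple $X$, i.e. that these universal components are simple, which I would deduce from the braided monodromy pairing between $G(\E)$ and $\Irr(\E)$ together with the centralizer relation, forcing $R$ to stabilize every non-invertible simple. Granting this, $\alpha^2=|R|$ is independent of $g$, so $\E$ is of type $(1,m;\alpha,n)$ with $\alpha^2=|R|$ a power of $q$, completing the proof.
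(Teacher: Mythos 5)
Your reduction to $\C\cong\B\boxtimes\E$ is essentially sound, and your route to it genuinely differs from the paper's: you first apply the Drinfeld--Gelaki--Nikshych--Ostrik decomposition of a nilpotent braided category into prime-power factors and then kill the non-$q$ factors by observing that the surjection $\mathcal{U}(\C)\to\mathbb{Z}_q$ vanishes on the $q'$-part of the universal grading group, so those factors land in $\C_0=\C_{pt}$ and are pointed. The paper instead first proves that \emph{all} non-invertible simples share one dimension $\alpha$ and only then invokes the decomposition, ruling out two non-pointed prime factors because they would force at least three distinct simple dimensions. Your ordering is legitimate, and your dimension count showing that each non-trivial graded piece $\E_g$ is a single $G(\E)$-orbit of simples of a fixed dimension $\alpha_g$ is also correct.

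The genuine gap is exactly where you write ``Granting this'': the statement that the $\alpha_g$ coincide (equivalently, that $\Pic[X]$ is the same subgroup for every non-invertible simple $X$) is the entire content of the type claim $(1,m;\alpha,n)$, and you do not prove it. The route you sketch is also shaky: the identity $\E_{ad}=C_\E(\E_{pt})$ holds for \emph{non-degenerate} braided categories, whereas here one only has the inclusion $\E_{ad}\subseteq C_\E(\E_{pt})$ for a general braided $\E$; and ``the braided monodromy pairing\dots forcing $R$ to stabilize every non-invertible simple'' is an assertion, not an argument. The paper's mechanism is different and, notably, uses no braiding at this step: take $X_1$ of \emph{minimal} non-invertible dimension $\alpha$, let $\D$ be the pointed subcategory on the stabilizer $\Pic[X_1]$, of dimension $\alpha^2$; the commutator $\D^{co}$ contains $\C_{pt}$ and $X_1$, so the divisibility squeeze $\FPdim(\C_{pt})\mid\FPdim(\D^{co})\mid q\FPdim(\C_{pt})$ forces $\D^{co}=\C$; then Gelaki--Nikshych's Lemma 4.15 gives $\C_{ad}=(\D^{co})_{ad}\subseteq\D$, hence $\FPdim(\C_{ad})=\alpha^2$ and every simple $X$ satisfies $\FPdim(X)^2\leq\alpha^2$, i.e.\ $\FPdim(X)\in\{1,\alpha\}$. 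You need this argument, or a genuine substitute for it, to close the proof.
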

\begin{proof}
Let $X_1,X_2,\cdots,X_s$ be a list of all non-isomorphic simple objects of $\C$ such that $1<\FPdim(X_1)\leq\cdots\leq\FPdim(X_s)$. We may say that $\FPdim(X_1)=\alpha$ for some positive real number $\alpha$. By Lemma \ref{lem21} and Lemma \ref{lem23}, $\C_{ad}\subseteq\C_0=\C_{pt}$. So  $X_1\otimes X_1^*$ is contained in $\C_{pt}$, and hence the stabilizer $G[X_1]$ of $X_1$ under the action of $G[\C]$ is of order $\alpha^2$. Let $\D$ be the fusion subcategory generated by simple objects in $G[X_1]$. It is a pointed fusion subcategory of $\C$ with FP dimension $\alpha^2$.

Let $\D^{co}$ be the commutator of $\D$ in $\C$; that is, $\D^{co}$ is the fusion subcategory of $\C$ generated by all simple objects $X$ of $\C$ such that $X\otimes X^*$ is contained in $\D$ \cite{gelaki2008nilpotent}. Clearly, all invertible objects and $X_1$ are contained in $\D^{co}$, which means that $\FPdim(\D^{co})\geq\FPdim(\C_{pt})+\alpha^2$. On the other hand, $\FPdim(\C_{pt})$ divides $\FPdim(\D^{co})$ and $\FPdim(\D^{co})$ divides $\FPdim(\C)=q\FPdim(\C_{pt})$. This implies that $\FPdim(\D^{co})=\FPdim(\C)$. Hence, we must have that $\D^{co}=\C$.

By \cite[Lemma 4.15]{gelaki2008nilpotent}, $\C_{ad}=(\D^{co})_{ad}\subseteq\D$. On the other hand, $\C_{ad}$ has FP dimension at least $\alpha^2$ since $X_1\otimes X_1^*$ is contained in it. It follows that $\C_{ad}=\D$. Since $\C_{ad}$ has FP dimension $\alpha^2$, $\C$ can not have simple objects with FP dimension greater than $\alpha$. In other words, the FP dimensions of simple objects of $\C$ can only be $1$ or $\alpha$.

Since $\C$ is braided and nilpotent,  \cite[Theorem 1.1]{drinfeld2007group} shows that we have a decomposition $\C\cong\boxtimes_{p_i}\C_{p_i}$, where $\C_{p_i}$ is a fusion subcategory of prime power dimension. The simple object $X$ of $\C$ has the form $X\cong \otimes_{p_i}X_{p_i}$, where $X_{p_i}$ is a simple object of $\C_{p_i}$.  So if there exist $\C_{p_i}$ and $\C_{p_k}$ such that they both contain non-invertible simple objects, then the number of distinct FP dimensions of simple objects of $\C$ is at least $3$. This contradicts the results obtained above. Therefore, there is only one subcategory in the decomposition of $\C$ such that it contains non-invertible simple objects. So $\C$ has the decomposition $\C\cong \B\boxtimes\E$, where $\B$ is a pointed fusion category, $\E$ is a fusion category of a prime power dimension. In particular,  $\E$ is of type $(1,m;\alpha,n)$ for some positive integers $m,n$. We shall prove that $\FPdim(\E)$ is a power of $q$.

Assume that $\FPdim(\E)=p^a$ for some prime number $p$, and hence we may assume that $\FPdim(\E_{pt})=p^i$ for some $0\leq i\leq a-1$. Lemma \ref{lem23}(2) and our assumption show that $\C_0=\C_{pt}$, and hence $\C_0=\B\boxtimes \E_{pt}$.

On the one hand, $\FPdim(\C)=q\FPdim(\C_0)$ since $\C$ is a $\mathbb{Z}_q$-extension of $\C_0$. On the other hand, $\FPdim(\C)=\FPdim(\B)\FPdim(\E)$ since $\C$ has the decomposition $\B\boxtimes\E$. Hence, we have
\begin{align*}
\FPdim(\B)\FPdim(\E)&=q\FPdim(\C_0);\\
\FPdim(\B)p^a&=q\FPdim(\B)p^i;\\
p^a&=q\cdot p^i.
\end{align*}
This means that $p=q$ and $i=a-1$. So $\FPdim(\E)$ is a power of $q$. Finally, \cite[Theorem 2.11]{etingof2011weakly} shows that $\alpha^2$ is a power of $q$.
\end{proof}

\begin{cor}\label{cor32}
Let $\C$ be a braided $\mathbb{Z}_q$-extension of a pointed fusion category. Then

(1)\, If $\C$ is not integral then $\C$ is a generalized Tambara-Yamagami fusion category. In this case $\C\cong \B\boxtimes\E$, where $\B$ is a pointed fusion category, $\E$ has FP dimension $2^k$ for some $k$.

(2)\, If $q>2$ then $\C$ is integral.
\end{cor}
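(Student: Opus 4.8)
The plan is to obtain both parts from Theorem~\ref{thm31} and Lemma~\ref{lem24}, the one new ingredient being that a non-integral braided fusion category of prime power FP dimension must have that prime equal to $2$. I will first reduce part~(2) to part~(1). Suppose $q>2$. If $\C$ is pointed then all simple objects have FP dimension $1$, so $\C$ is integral. If $\C$ is not pointed and, for contradiction, not integral, then part~(1) makes $\C$ a generalized Tambara-Yamagami fusion category, whereupon Lemma~\ref{lem24} yields $q=2$, against $q>2$. Hence part~(2) is the contrapositive of the dimensional content of part~(1), and everything reduces to part~(1).

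For part~(1), assume $\C$ is not integral; in particular $\C$ is not pointed, so Theorem~\ref{thm31} gives $\C\cong\B\boxtimes\E$ with $\B$ pointed and $\E$ of FP dimension a power of $q$, say $\FPdim(\E)=q^{a}$, every simple object of $\E$ having FP dimension $1$ or $\alpha$ with $\alpha^{2}$ a power of $q$, say $\alpha^{2}=q^{b}$. The factor $\E$ inherits a braiding from the decomposition of the braided category $\C$. Since $\B$ is pointed it is integral, and as FP dimensions multiply over a Deligne product ($\FPdim(V\boxtimes W)=\FPdim(V)\FPdim(W)$), non-integrality of $\C$ forces non-integrality of $\E$; thus $\alpha\notin\mathbb{Z}$ and $b$ is odd. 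It remains to upgrade the non-integrality of the braided category $\E$ to $q=2$. Granting this, $\FPdim(\E)=q^{a}=2^{a}=:2^{k}$, and Lemma~\ref{lem24} (now applicable since $q=2$ and $\C$ is not pointed) identifies $\C$ as a generalized Tambara-Yamagami fusion category, finishing part~(1).

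The hard part, and the only place where the braiding is essential, is the claim that a non-integral braided fusion category $\E$ of FP dimension $q^{a}$ satisfies $q=2$; equivalently, a braided fusion category of odd FP dimension is integral. I would prove this using the canonical faithful grading that every weakly integral braided fusion category admits over an elementary abelian $2$-group $\Gamma$ with integral trivial component $\E_{0}$. Faithfulness gives $\FPdim(\E)=|\Gamma|\,\FPdim(\E_{0})$ with $|\Gamma|$ a power of $2$; since $|\Gamma|$ divides $\FPdim(\E)=q^{a}$, an odd prime $q$ forces $|\Gamma|=1$, so $\Gamma$ is trivial and $\E=\E_{0}$ is integral, contradicting the non-integrality of $\E$. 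Hence $q$ is even, i.e.\ $q=2$.

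Alternatively, one can try to argue directly on the pointed braided subcategory $\C_{ad}=\D$ of FP dimension $\alpha^{2}$ produced in the proof of Theorem~\ref{thm31}, together with the simple object $X_{1}$ of FP dimension $\alpha$, for which $X_{1}\otimes X_{1}^{*}=\bigoplus_{g\in G[X_{1}]}g$ with $|G[X_{1}]|=\alpha^{2}$: the balancing axiom relates the twist of $X_{1}$ to a Gauss sum of the quadratic form that the braiding induces on $G[X_{1}]$, and a consistency constraint of Gauss-sum type is expected to force $|G[X_{1}]|=\alpha^{2}$ to be even, hence $q=2$. This route is more delicate, since it requires controlling the degeneracy of the form on $G[X_{1}]$, so I would favour the grading argument above. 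In either case the point is that a ``square-root'' simple object is a $2$-primary phenomenon in the braided world: the pure dimension-counting of Theorem~\ref{thm31} cannot detect the parity of $b$, which is precisely why the braiding hypothesis enters only at this last step.
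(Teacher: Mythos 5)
Your proof is correct, but for part (1) you take a genuinely different route from the paper. The paper establishes the generalized Tambara--Yamagami property \emph{directly}: using the fact (from the proof of Theorem~\ref{thm31}) that every component of the universal grading has FP dimension $\alpha^2$, it observes that each component is either pointed or contains a single simple object of dimension $\alpha$, so that for non-invertible simples $X,Y$ the product $X\otimes Y$ is either a sum of invertibles or $\alpha$ copies of an $\alpha$-dimensional simple --- and the latter forces $\alpha\in\mathbb{Z}$, i.e.\ integrality. Only afterwards does it deduce $\FPdim(\E)=2^k$ by citing \cite[Corollary 3.11]{gelaki2008nilpotent}. You invert the order: you first prove $q=2$ by passing non-integrality from $\C$ to $\E$ and essentially re-proving that cited corollary via the canonical faithful grading of a weakly integral fusion category by an elementary abelian $2$-group with integral trivial component (\cite[Theorem 3.10]{gelaki2008nilpotent}), and only then obtain the generalized Tambara--Yamagami property from Lemma~\ref{lem24}. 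Both arguments hinge on the same external fact; yours buys a cleaner deduction of the TY structure (outsourced to Lemma~\ref{lem24}) at the cost of reproving the $2$-grading corollary, while the paper's fusion-rule argument is self-contained modulo the citation. Part (2) is handled identically in both. One small correction: the elementary abelian $2$-grading of \cite[Theorem 3.10]{gelaki2008nilpotent} holds for \emph{any} weakly integral fusion category, so the braiding is not ``essential'' at that step as you claim --- it is essential only for the Deligne decomposition of Theorem~\ref{thm31}. Your sketched Gauss-sum alternative is too vague to count as a proof (``is expected to force''), but you correctly discard it in favour of the grading argument.
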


\begin{proof}
(1)\,Let $\C=\oplus_{g\in \mathcal{U}(\C)}\C_g$ be the universal grading of $\C$. Then $\alpha^2=\FPdim(\C_g)$ for all $g\in \mathcal{U}(\C)$, by the proof of Theorem \ref{thm31}. It follows that every component $\C_g$ either contains only one simple object with FP dimension $\alpha$, or contains $\alpha^2$ non-isomorphic invertible simple objects. This hence implies that, for any $X, Y\in \Irr_{\alpha}(\C)$, $X\otimes Y$ is either a direct sum of $\alpha^2$ invertible simple objects, or a direct sum of $\alpha$ copies of a simple object with FP dimension $\alpha$. Note that the later case holds true if and only if $\C$ is integral. Hence, if $\C$ is not integral then $X\otimes Y$ is a direct sum of invertible simple objects, so $\C$ is a generalized Tambara-Yamagami fusion category.

Let $\E$ be the braided fusion category of $q$ power dimension in Theorem \ref{thm31}, and let $\FPdim(\E)=q^k$ for some $k$. Since $\C$ is not integral then $\E$ is not integral. This happens only if $q=2$ \cite[Corollary 3.11]{gelaki2008nilpotent}.

(2)\, Suppose on the contrary that $\C$ is not integral. Then $\C$ is a generalized Tambara-Yamagami fusion category by Part (1), and hence $q=2$. This is a contradiction.
\end{proof}

Recall that a fusion category is called group-theoretical if it is Morita equivalent to a pointed fusion category.

 \begin{rem}\label{rem33}
 If $\C$ is integral then $\C$ is a group-theoretical fusion category by \cite[Theorem 6.10]{drinfeld2007group}, since $\C$ is braided and nilpotent of nilpotency class $2$.
 \end{rem}

 Let $\C$ be a braided fusion category, and $\D$ be a fusion subcategory of $\C$. The M\"{u}ger centralizer of $\D$ in $\C$ is the fusion subcategory
 $$\D'=\{X\in \C|c_{Y,X}c_{X,Y}=id_{X\otimes Y},\,\mbox{for all\,\,} Y\in \D\}.$$

 The M\"{u}ger center $\mathcal{Z}_2(\C)$ of $\C$ is the M\"{u}ger centralizer of $\C$ itself. A braided fusion category $\C$ is non-degenerate if its M\"{u}ger center $\mathcal{Z}_2(\C)$ is trivial. A braided fusion category is premodular if  it has a spherical structure. By \cite[Proposition 8.23, 8.24]{etingof2005fusion}, any weakly integral fusion category is premodular. Hence a weakly integral braided fusion category is modular if and only if it is non-degenerate. In particular, if $\C$ is a braided $G$-extension of a pointed fusion category then $\C$ is modular if and only if it is non-degenerate.

\begin{thm}\label{thm34}
Let $\C$ be a $\mathbb{Z}_q$-extension of a pointed fusion category. Assume in addition that $\C$ is modular. Then $\C$ fits into one of the following classes:

(1)\,$\C$ is pointed;

(2)\,$\C\cong \B\boxtimes \I$, where $\B$ is a pointed modular category and $\I$ is an Ising category.
\end{thm}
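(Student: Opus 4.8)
The plan is to split on whether $\C$ is pointed, reducing the interesting case to Theorem \ref{thm31} and then extracting modularity of the factors. If $\C$ is pointed we are in case (1) and there is nothing to prove, so I would assume $\C$ is not pointed. Then Theorem \ref{thm31} applies and yields a decomposition $\C\cong\B\boxtimes\E$, where $\B$ is pointed and $\E$ is a braided fusion category of $q$-power dimension, of type $(1,m;\alpha,n)$ with $\alpha^2$ a power of $q$. Since $\C$ is modular, it is in particular non-degenerate, and I would use the fact that the M\"{u}ger center of a Deligne product is the product of the M\"{u}ger centers: $\mathcal{Z}_2(\B\boxtimes\E)=\mathcal{Z}_2(\B)\boxtimes\mathcal{Z}_2(\E)$. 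Hence non-degeneracy of $\C$ forces both $\B$ and $\E$ to be non-degenerate, and since each is weakly integral and braided, each is modular by the criterion recorded just before the theorem. So $\B$ is a pointed modular category, and it remains to identify the non-pointed modular factor $\E$ as an Ising category.

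The core of the argument is therefore to show that a non-pointed, modular, braided fusion category $\E$ of $q$-power dimension and type $(1,m;\alpha,n)$ must be an Ising category. First I would pin down $q=2$: by Corollary \ref{cor32}(2), if $q>2$ then $\C$ is integral, hence $\E$ is integral; but then $\alpha$ is an integer and $\alpha^2$ a power of $q$, and I would argue via the structure of $\E_{ad}\subseteq\E_{pt}$ (as in the proof of Theorem \ref{thm31}) that a non-degenerate integral category of this shape cannot exist—an integral modular category whose non-invertible simple objects all have the same dimension $\alpha>1$ and whose adjoint subcategory is pointed of dimension $\alpha^2$ collapses, since the global and pointed dimensions become incompatible with non-degeneracy. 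Concretely, for a non-degenerate category $\FPdim(\E_{pt})\cdot\FPdim(\E_{pt}')=\FPdim(\E)$ with $\E_{pt}'=\mathcal{Z}_2(\E_{ad})$-type relations from \cite{gelaki2008nilpotent}, and the pointed part is forced to be too large, yielding a contradiction unless $\E$ is pointed. This rules out $q>2$, so $q=2$ and $\alpha^2$ is a power of $2$.

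With $q=2$, Corollary \ref{cor32}(1) (or Lemma \ref{lem24}) shows $\E$ is a generalized Tambara-Yamagami fusion category, so by Example \ref{example22} its non-invertible simple objects all have dimension $\sqrt{|N|}$ and $\FPdim(\E)=2|G(\E)|$. The task is then to force $|N|=1$, i.e. $\alpha=\sqrt{2}$ and $|G(\E)|=2$, which is exactly the Ising case of Example \ref{example24}. Here I would again invoke non-degeneracy: the invertible objects of $\E$ form the pointed subcategory $\E_{pt}$, whose M\"{u}ger center inside the modular category controls how large $G(\E)$ can be. Since $\FPdim(\E)=2|G(\E)|$ and $\E_{pt}$ has dimension $|G(\E)|$, non-degeneracy of $\E$ combined with the bound $\FPdim(\E_{pt})\cdot\FPdim(\mathcal{Z}_2(\E_{pt})^{\mathrm{op}})$ type relations forces $\E_{pt}$ to be as small as possible; concretely the only non-degenerate generalized Tambara-Yamagami categories are the Ising categories, giving $|N|=|G(\E)|=2$ and $\FPdim(\E)=4$. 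I expect the main obstacle to be exactly this last step—ruling out large $G(\E)$, i.e. showing that a modular generalized Tambara-Yamagami category must be Ising. The cleanest route is likely a centralizer/dimension count showing that any nontrivial central invertible object obstructs non-degeneracy, together with the classification of pointed modular (i.e. metric-group) categories to eliminate the possibility that $\E$ itself is pointed. Combining these, $\E\cong\I$ is Ising and $\C\cong\B\boxtimes\I$ as required.
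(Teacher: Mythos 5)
Your overall plan (reduce to the non-pointed case, invoke Theorem \ref{thm31}, then use non-degeneracy to pin down the factors) is reasonable, but at the two places where the real work happens you only gesture at an argument, and in both places the paper uses a specific input that your sketch is missing. First, to rule out $q>2$ you propose a dimension count showing that ``a non-degenerate integral category of this shape cannot exist,'' but you never actually produce the contradiction; the relations you allude to do not by themselves force the pointed part to be ``too large.'' The paper's proof instead uses the Gelaki--Nikshych theorem that $\mathcal{U}(\C)\cong G(\C)$ for a modular category. Combined with $\FPdim(\C)=q\FPdim(\C_{pt})$ (from Lemma \ref{lem23}), this gives $\FPdim(\C_g)=q$ for every component of the universal grading, hence $\FPdim(\C_{ad})=q$; since the proof of Theorem \ref{thm31} shows $\FPdim(\C_{ad})=\FPdim(X)^2$ for a non-invertible simple $X$, one gets $\FPdim(X)=\sqrt{q}$, which is irrational, so $\C$ is not integral and Corollary \ref{cor32} forces $q=2$. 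Without $\mathcal{U}(\C)\cong G(\C)$ (or some substitute) you cannot pin down $\FPdim(\C_{ad})$, and your step collapses.

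Second, you explicitly flag as ``the main obstacle'' the claim that a modular generalized Tambara--Yamagami category must be Ising, and you do not prove it. The paper handles this by citing Natale's classification (Theorem 5.4 of \cite{natale2013faithful}), which says a braided (modular) generalized Tambara--Yamagami category is $\B\boxtimes\I$ with $\B$ pointed. Note also that your intermediate claim is false as stated: the only non-degenerate generalized Tambara--Yamagami categories are \emph{not} just the Ising categories --- $\B\boxtimes\I$ with $\B$ pointed modular and nontrivial of $2$-power dimension is again a non-degenerate generalized Tambara--Yamagami category, so your factor $\E$ need not itself be Ising (the extra pointed part must be absorbed into $\B$). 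The final statement of the theorem survives this, but the step ``$|N|=|G(\E)|=2$ and $\FPdim(\E)=4$'' is not correct and cannot be proved. In short: the skeleton is compatible with the paper's argument, but both load-bearing steps are left unproved, and the paper closes them with $\mathcal{U}(\C)\cong G(\C)$ and with Natale's Theorem 5.4 respectively.
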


\begin{proof}
We may assume that $\C$ is not pointed. Since $\C$ is modular, we have $\mathcal{U}(\C)\cong G(\C)$ \cite[Theorem 6.2]{gelaki2008nilpotent}. It follows that the order of $\mathcal{U}(\C)$ is equal to $|G(\C)|=\FPdim(\C_{pt})$. Let $\C=\oplus_{g\in\mathbb{Z}_q}\C_g$ be the $\mathbb{Z}_q$-extension. By Lemma \ref{lem23},
$$\FPdim(\C_{pt})=\FPdim(\C_0),$$
and hence
 $$\FPdim(\C)=q\FPdim(\C_0)=q\FPdim(\C_{pt}).$$
On the other hand, $$\FPdim(\C)=|\mathcal{U}(\C)|\FPdim(\C_g)=\FPdim(\C_{pt})\FPdim(\C_g),$$
for any $g\in\mathcal{U}(\C)$. Hence, every component of the universal grading has FP dimension $q$. In particular, $\C_{ad}$ has FP dimension $q$.

Let $X$ be a non-invertible simple object of $\C$. Then the proof of Theorem \ref{thm31} shows that $\FPdim(\C_{ad})=\FPdim(X)^2$, which means that $\FPdim(X)=\sqrt{q}$. Since $q$ is a prime number, $\sqrt{q}$ is not an integer, and hence $\C$ is not integral. By Corollary \ref{cor32}, $\C$ is a generalized Tambara-Yamagami fusion category. Therefore $\C\cong \B\boxtimes \I$ as described above, by \cite[Theorem 5.4]{natale2013faithful}.
\end{proof}

\begin{cor}\label{cor45}
Let $q$ be a prime number and let $\C$ be a modular category of FP dimension $q^3$. Then

(1)\, $\C$ is pointed, or

(2)\, $\C\cong \B\boxtimes\I$, where $\B$ is a pointed modular category of FP dimension $2$, $\I$ is an Ising category.
\end{cor}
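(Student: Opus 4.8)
The plan is to reduce the statement to Theorem \ref{thm34}: I will show that a non-pointed modular category $\C$ of FP dimension $q^3$ is necessarily a $\mathbb{Z}_q$-extension of a pointed fusion category, and then read off the conclusion together with a dimension count. Since $q^3$ is a prime power, $\C$ is nilpotent \cite{gelaki2008nilpotent}, so for $\C\neq\vect$ its universal grading group $\mathcal{U}(\C)$ is nontrivial; as $\C$ is modular we have $\mathcal{U}(\C)\cong G(\C)$ \cite{gelaki2008nilpotent}, whence $|\mathcal{U}(\C)|=\FPdim(\C_{pt})=q^m$ with $m\in\{1,2,3\}$, and from $\FPdim(\C)=|\mathcal{U}(\C)|\FPdim(\C_0)$ together with $\C_0=\C_{ad}$ we get $\FPdim(\C_{ad})=q^{3-m}$. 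The case $m=3$ is exactly case (1). The core of the argument is to show that, in the non-pointed case, $m=2$ always holds.

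When $m=2$, the trivial component $\C_{ad}$ has FP dimension $q$ and is therefore pointed, and every graded component has FP dimension $q$. A component of FP dimension $q$ must consist either of $q$ invertible objects or of a single simple of FP dimension $\sqrt q$. The components of the first kind are exactly those meeting $\C_{pt}$; the support of the induced grading on $\C_{pt}$ is a subgroup $H\leq\mathcal{U}(\C)$ with $\oplus_{h\in H}\C_h=\C_{pt}$, and counting invertibles gives $|H|q=q^2$, so $H$ has index $q$. The quotient grading by $\mathcal{U}(\C)/H\cong\mathbb{Z}_q$ then exhibits $\C$ as a $\mathbb{Z}_q$-extension of the pointed category $\C_{pt}$. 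Theorem \ref{thm34} now applies; as $\C$ is not pointed it yields $\C\cong\B\boxtimes\I$ with $\I$ an Ising category, so $\FPdim(\C)=4\FPdim(\B)=q^3$. Since $\FPdim(\B)$ is a positive integer this forces $q=2$ and $\FPdim(\B)=2$, which is precisely case (2).

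The main obstacle, and the step I expect to be delicate, is ruling out $m=1$. Here $\mathcal{U}(\C)\cong\mathbb{Z}_q$ and $\C_0=\C_{ad}$ has FP dimension $q^2$. Since $\C$ is weakly integral, the squares $\FPdim(X)^2$ of simple objects are integers dividing $q^3$, so the FP dimensions of simple objects lie in $\{1,\sqrt q,q\}$. A simple of FP dimension $q$ cannot lie in $\C_{ad}$, for then $\FPdim(\C_{ad})\geq 1+q^2>q^2$; hence the simple objects of $\C_{ad}$ have FP dimension only $1$ or $\sqrt q$. Now for any non-invertible simple $X$ of $\C$, the object $X\otimes X^*$ lies in $\C_{ad}$, so $\FPdim(X)^2=a+b\sqrt q$, where $a$ is the FP dimension of its invertible part and $b\sqrt q$ that of its $\sqrt q$-dimensional part; moreover each invertible occurs in $X\otimes X^*$ with multiplicity at most one, so $a\leq|G(\C)|=q$.

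Two consequences then close the argument, using only the irrationality of $\sqrt q$. First, if some $X$ had $\FPdim(X)=q$, the identity $q^2=a+b\sqrt q$ with $a,b$ integers would force $b=0$ and $a=q^2>q$, contradicting $a\leq q$; hence every non-invertible simple of $\C$ has FP dimension $\sqrt q$. Second, for such an $X$ the identity $q=a+b\sqrt q$ forces $b=0$ and $a=q$, so $X\otimes X^*=\C_{pt}$. Since $\C_{ad}$ is generated by the objects $X\otimes X^*$ (these equal $\textbf{1}$ for invertible $X$ and $\C_{pt}$ for non-invertible $X$, which exist as $\C$ is not pointed), we conclude $\C_{ad}=\C_{pt}$, of FP dimension $q$, contradicting $\FPdim(\C_{ad})=q^2$. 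This rules out $m=1$, completing the reduction to the $\mathbb{Z}_q$-extension setting and hence, via the computation of the second paragraph, the proof.
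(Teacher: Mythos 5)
Your proof is correct, but it takes a genuinely different route from the paper. The paper gets the $\mathbb{Z}_q$-extension structure for free from \cite[Theorem 8.28]{etingof2005fusion}, which directly exhibits $\C$ as a $\mathbb{Z}_q$-extension of a fusion category $\D$ of FP dimension $q^2$; it then invokes \cite[Proposition 8.32]{etingof2005fusion} to conclude that $\D$ is pointed or Ising, applies Theorem \ref{thm34} in the pointed case, and disposes of the Ising case with M\"{u}ger's decomposition theorem ($\C\cong\D\boxtimes\D'$ with $\D'$ modular of FP dimension $q$, hence pointed). You instead avoid the classification of dimension-$q^2$ categories entirely and run a self-contained analysis of the universal grading: using $\mathcal{U}(\C)\cong G(\C)$ you split into cases by $|G(\C)|=q^m$, show $m=2$ forces the pointed part to sit over an index-$q$ subgroup of $\mathcal{U}(\C)$ (so the quotient grading makes $\C$ a $\mathbb{Z}_q$-extension of $\C_{pt}$ and Theorem \ref{thm34} applies), and rule out $m=1$ by the irrationality argument on $X\otimes X^*$. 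Your route costs more local work but needs fewer external inputs (no Proposition 8.32, no M\"{u}ger theorem) and never encounters the Ising-base case at all; the paper's route is shorter at the price of citing heavier machinery. One small point to patch: from ``$\FPdim(X)^2$ divides $q^3$'' the possible dimensions are $\{1,\sqrt q,q,q\sqrt q\}$, not $\{1,\sqrt q,q\}$; the value $q\sqrt q$ is excluded because then $X\otimes X^*$ would have FP dimension $q^3>q^2\geq\FPdim(\C_{ad})$ (or simply because $1+q^3>\FPdim(\C)$), which is the same style of argument you already use, so the omission is harmless.
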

\begin{proof}
\cite[Theorem 8.28]{etingof2005fusion} shows that $\C$ is a $\mathbb{Z}_q$-extension of a fusion category $\D$, where $\D$ has FP dimension $q^2$. By \cite[Proposition 8.32]{etingof2005fusion}, $\D$ is either pointed or an Ising fusion category.

If $\D$ is pointed then Theorem \ref{thm34} shows that $\C$ is either pointed, or equivalent to $\B\boxtimes \I$, where $\B$ is a pointed modular category of FP dimension $2$, $\I$ is an Ising category.

If $\D$ is  an Ising fusion category then $\D$ is a modular category by \cite[Corollary B.12]{drinfeld2010braided}. By M\"{u}ger's Theorem \cite[Theorem 4.2]{muger2003structure}, $\C$ is equivalent to $\D\boxtimes \D'$, where $\D'$ is the M\"{u}ger centralizer of $\D$. Again by M\"{u}ger's Theorem, $\D'$ is a modular category since $\C$ is modular. Finally, $\D'$ is a pointed fusion category by  \cite[Corollary 8.30]{etingof2005fusion} since its FP dimension is $2$.
\end{proof}


\section{Acknowledgements}
This paper was written during a visit to University of Southern California. The author would like to thank Susan Montgomery and the Department of Mathematics of University of Southern California for their outstanding hospitality. The author also thanks Henry Tucker for useful discussion. This work is partially supported by the Fundamental Research Funds for the Central Universities (KYZ201564), the Natural Science Foundation of China (11201231) and the Qing Lan Project.




\begin{thebibliography}{10}
\expandafter\ifx\csname url\endcsname\relax
  \def\url#1{\texttt{#1}}\fi
\expandafter\ifx\csname urlprefix\endcsname\relax\def\urlprefix{URL }\fi

\bibitem{drinfeld2007group}
V.~Drinfeld, S.~Gelaki, D.~Nikshych, V.~Ostrik, Group-theoretical properties of
  nilpotent modular categories, preprint arXiv:0704.0195.

\bibitem{drinfeld2010braided}
V.~Drinfeld, S.~Gelaki, D.~Nikshych, V.~Ostrik, On braided fusion categories
  {I}, Selecta Math., New Ser. 16~(1) (2010) 1--119.

\bibitem{etingof2005fusion}
P.~Etingof, D.~Nikshych, V.~Ostrik, On fusion categories, Ann. Math. 162~(2)
  (2005) 581--642.

\bibitem{etingof2011weakly}
P.~Etingof, D.~Nikshych, V.~Ostrik, Weakly group-theoretical and solvable
  fusion categories, Adv. Math. 226~(1) (2011) 176--205.

\bibitem{gelaki2008nilpotent}
S.~Gelaki, D.~Nikshych, Nilpotent fusion categories, Adv. Math. 217~(3) (2008)
  1053--1071.

\bibitem{kassel1995quantum}
C.~Kassel, Quantum groups, {GTM} 155 (1995).

\bibitem{liptrap2010generalized}
J.~Liptrap, Generalized {T}ambara-{Y}amagami categories, preprint
  arXiv:1002.3166.

\bibitem{muger2003structure}
M.~M{\"u}ger, On the structure of modular categories, Proc. London Math. Soc.
  87~(02) (2003) 291--308.

\bibitem{natale2013faithful}
S.~Natale, Faithful simple objects, orders and gradings of fusion categories,
  Algebr. Geom. Topol. 13 (2013) 1489--1511.

\bibitem{Tambara1998692}
D.~Tambara, S.~Yamagami, Tensor categories with fusion rules of self-duality
  for finite abelian groups, J. Algebra 209~(2) (1998) 692 -- 707.

\end{thebibliography}

\end{document}